\documentclass[10pt]{article}
\usepackage[T1]{fontenc}
\usepackage[utf8]{inputenc}
\usepackage{lmodern}
\usepackage{amsmath,amssymb,amsthm,mathtools}
\usepackage[margin=1in]{geometry}
\usepackage{microtype}
\usepackage{enumitem}
\usepackage{indentfirst}
\usepackage[colorlinks=true,linkcolor=blue,citecolor=blue,urlcolor=blue]{hyperref}
\hypersetup{
	pdftitle={A Note on Long Cycles in 2-Connected Tough Graphs},
	pdfauthor={Songling Shan},
	pdfkeywords={Toughness, Circumference, Treedepth, Bounded-degree spanning tree}
}

\newtheorem{theorem}{Theorem}[section]
\newtheorem{conjecture}[theorem]{Conjecture}
\newtheorem{lemma}[theorem]{Lemma}

\newtheorem{question}[theorem]{Question}
\theoremstyle{remark}

\DeclareMathOperator{\td}{td}
\DeclareMathOperator{\cir}{cir}

\begin{document}
	
	\title{Long Cycles in 2-Connected Tough Graphs}
	\author{
		Songling Shan\thanks{Auburn University, Department of Mathematics and
			Statistics, Auburn, AL 36849. Email: \texttt{szs0398@auburn.edu}.
			Partially supported by NSF grant DMS-2451895.}
	}
	\date{\today}
	\maketitle
	
	\begin{abstract}
		Let $G$ be a graph. The circumference of $G$, denoted by $\cir(G)$, is
		the length of a longest cycle in $G$, or zero if $G$ is acyclic. In
		1993, Broersma, van den Heuvel, Jung, and Veldman conjectured that,
		for every $t>0$, there is a constant $A=A(t)>0$ such that every
		2-connected $t$-tough graph of order $n$ has circumference at least
		$A\log n$; the conjecture is recorded as Conjecture~2 in the 2006
		survey on toughness by Bauer, Broersma, and Schmeichel. In this note,
		we confirm the conjecture. More precisely, every 2-connected $t$-tough
		graph $G$ of order $n$ satisfies
		$\cir(G)\ge \lceil \log_k((k-1)n+1)\rceil$, where
		$k=\lceil 1/t\rceil+2$. The proof combines Win's bounded-degree
		spanning tree theorem with the theorem of Bria\'nski, Joret, Majewski,
		Micek, Seweryn, and Sharma that the treedepth of a 2-connected graph
		is at most its circumference.
	\end{abstract}

	\noindent\textbf{Keywords.} Toughness; Circumference; Treedepth;
	Bounded-degree spanning tree
	
	\section{Introduction}
	
	We consider only finite simple graphs. Let $G$ be a graph. Denote by $V(G)$
	and $E(G)$ the vertex set and the edge set of $G$, respectively, and let
	$c(G)$ be the number of components of $G$. For $v\in V(G)$, let $d_G(v)$
	be the degree of $v$, and let $\Delta(G)$ be the maximum degree of $G$.
	For $S\subseteq V(G)$, let $G-S$ be the graph obtained from $G$ by
	deleting the vertices of $S$; we write $G-v$ for $G-\{v\}$. The
	\emph{circumference} of $G$, denoted by $\cir(G)$, is the length of a
	longest cycle in $G$, or zero if $G$ is acyclic. All logarithms without a
	subscript are to base $2$.
	
	Let $t>0$. A graph $G$ is \emph{$t$-tough} if $|S|\ge t\,c(G-S)$ for every
	$S\subseteq V(G)$ with $c(G-S)\ge 2$. The \emph{toughness} $\tau(G)$ of a
	noncomplete graph $G$ is the largest real number $t$ for which $G$ is
	$t$-tough; as usual, $\tau(K_n)=\infty$. Note that a $t$-tough graph is
	$t'$-tough for every $0<t'\le t$.
	
	For a positive integer $r$, let $\gamma_r(t,n)$ denote the minimum
	circumference among all $r$-connected $t$-tough graphs of order $n$.
	Broersma, van den Heuvel, Jung, and Veldman~\cite{BroersmaEtAl} proved
	that, for fixed $t>0$,
	$\gamma_2(t,n)\log \gamma_2(t,n)\ge(2-o(1))\log n$, and they constructed
	examples showing that $\gamma_2(t,n)=O(\log n)$ when $0<t\le1$. They
	conjectured that the logarithmic bound is the correct order of magnitude.
	The conjecture also appears as Conjecture~2 in the survey of Bauer,
	Broersma, and Schmeichel~\cite{BauerBroersmaSchmeichel}.
	
	\begin{conjecture}[{Broersma, van den Heuvel, Jung, and Veldman~\cite[Conjecture 19]{BroersmaEtAl}}]\label{conj:survey}
		For every $t>0$, there is a positive constant $A=A(t)$ such that
		$\gamma_2(t,n)\ge A\log n$.
	\end{conjecture}
	
	Broersma, van den Heuvel, Jung, and Veldman~\cite{BroersmaEtAl} verified Conjecture~\ref{conj:survey}
	for 3-connected graphs, and for 2-connected $K_{1,\ell}$-free graphs
	with $\ell$ fixed (in particular, for 2-connected graphs of bounded
	maximum degree). Our main result confirms the conjecture in general and
	gives an explicit bound.
	
	\begin{theorem}\label{thm:main}
		Let $t>0$, and let $G$ be a 2-connected $t$-tough graph of order $n$. Set
		$k=\lceil 1/t\rceil+2$. Then
		\[
		\cir(G)\ge \left\lceil \log_k\bigl((k-1)n+1\bigr)\right\rceil
		\ge \frac{\log n}{\log k}.
		\]
	\end{theorem}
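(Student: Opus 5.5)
The plan is to combine the two results named in the abstract. Win's theorem says that a $t$-tough graph with $t \ge 1/(k-2)$, for an integer $k\ge 3$, has a spanning tree of maximum degree at most $k$. Here we take $k=\lceil 1/t\rceil+2$, so $1/(k-2)\le t$ and $G$ is $1/(k-2)$-tough. (If $G$ is complete, it has a spanning path, which also works.) So $G$ has a spanning tree $T$ with $\Delta(T)\le k$. The theorem of Bria\'nski, Joret, Majewski, Micek, Seweryn, and Sharma then gives $\td(G)\le \cir(G)$, since $G$ is 2-connected. It therefore suffices to show that a connected graph $G$ on $n$ vertices with a spanning tree of maximum degree at most $k$ satisfies
\[
\td(G)\ge \left\lceil \log_k\bigl((k-1)n+1\bigr)\right\rceil,
\]
that is, $n\le 1+k+\dots+k^{d-1}=(k^d-1)/(k-1)$ where $d=\td(G)$.

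Treedepth is monotone under subgraphs, so $\td(G)\ge \td(T)$ and it is enough to treat $T$. I would prove by induction on $d$ that every connected graph $H$ with $\Delta(H)\le k$ and $\td(H)\le d$ has at most $(k^d-1)/(k-1)$ vertices. The natural recursion is this: $H$ has a root $r$ whose removal leaves components of treedepth at most $d-1$. The number of components of $H-r$ is at most $\deg(r)\le k$, since each component contains a neighbour of $r$. This gives $|H|\le 1+k\,N(d-1)$. That recursion, however, needs each component of $H-r$ to satisfy the degree bound $k$, which it does because subgraphs inherit it. With $N(1)=1$ we get $N(d)\le 1+k+\dots+k^{d-1}$, and the induction closes.

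The main obstacle is getting the constant right. A crude count would give $k$ components at the root but only $k-1$ at deeper levels, and the induction above already absorbs this by using the weaker bound $k$ at every level; that is fine since we only need an upper bound. We still have to check that the inductive hypothesis applies to connected components: the treedepth of a disconnected graph is the maximum over its components, so each component of $H-r$ has treedepth at most $d-1$ and inherits $\Delta\le k$. Rearranging $n\le (k^d-1)/(k-1)$ gives $k^d\ge (k-1)n+1$, and integrality of $d$ gives the ceiling. The final inequality in the statement follows from $(k-1)n+1\ge n$.
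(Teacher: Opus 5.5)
Your proposal is correct and follows the paper's proof essentially step for step: Win's theorem gives a spanning tree $T$ with $\Delta(T)\le k$, the root-deletion induction gives $|V(T)|\le (k^d-1)/(k-1)$ whenever $\td(T)\le d$, and monotonicity of treedepth together with the theorem of Bria\'nski et al.\ finishes the argument. The differences are only cosmetic: the paper checks Win's hypothesis $c(G-S)\le (k-2)|S|+2$ directly and states the counting lemma for trees rather than for connected graphs of maximum degree at most $k$; also, your aside that deeper levels contribute only $k-1$ components is not valid for treedepth, since the elimination root of a component of $T-r$ need not be adjacent to $r$ and can have $k$ neighbours inside that component, but your induction never uses this, so nothing is affected.
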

	
	The examples of Broersma, van den Heuvel, Jung, and Veldman~\cite{BroersmaEtAl} show that the
	logarithmic order in Theorem~\ref{thm:main} cannot be improved when
	$0<t\le1$. On the other hand, $k=3$ for every $t\ge1$, so the bound of
	Theorem~\ref{thm:main} does not improve as $t$ grows beyond $1$, whereas
	$\gamma_2(t,n)$ itself is expected to: Chv\'atal~\cite{Chvatal}
	conjectured that there is a constant $t_0$ such that every $t_0$-tough
	graph on at least three vertices is Hamiltonian, which would give
	$\gamma_2(t,n)=n$ for all $t\ge t_0$. Bauer, Broersma, and
	Veldman~\cite{BauerBroersmaVeldman} showed that necessarily
	$t_0\ge\frac94$.
	
	The survey~\cite{BauerBroersmaSchmeichel} also asks whether
	3-connectivity forces a polynomially long cycle in tough graphs, that is,
	whether $\gamma_3(t,n)\ge n^{B}$ for some positive constant $B=B(t)$.
	Broersma, van den Heuvel, Jung, and Veldman~\cite[Theorem~7]{BroersmaEtAl} constructed 3-connected
	1-tough graphs of order $n$ with circumference $O(\log n)$; since a
	1-tough graph is $t$-tough for every $0<t\le1$, this shows that
	$\gamma_3(t,n)=O(\log n)$ for all $0<t\le1$, and so the question is of
	interest only for $t>1$. We record it in this form.
	
	\begin{question}\label{ques:three}
		Let $t>1$. Is there a positive constant $B=B(t)$ such that
		$\gamma_3(t,n)\ge n^{B}$ for all $n$?
	\end{question}
	
	If Chv\'atal's conjecture is true, then Question~\ref{ques:three} has an
	affirmative answer for all $t\ge t_0$, so the question really concerns
	the range $1<t<t_0$.
	
	The proof of Theorem~\ref{thm:main} is short. By a theorem of
	Win~\cite{Win}, every connected $t$-tough graph has a spanning tree of
	maximum degree at most $\lceil 1/t\rceil+2$. Such a tree has treedepth
	logarithmic in its order. Finally, Bria\'nski, Joret, Majewski, Micek,
	Seweryn, and Sharma~\cite{BrianskiEtAl} proved that the treedepth of a
	2-connected graph is at most its circumference. In
	Section~\ref{sec:proof}, we recall these two theorems, establish the
	elementary treedepth bound for bounded-degree trees, and prove
	Theorem~\ref{thm:main}.
	
	\section{Proof of Theorem~\ref{thm:main}}\label{sec:proof}
	
	We first recall Win's bounded-degree spanning tree theorem. A spanning tree
	of maximum degree at most $k$ is sometimes called a \emph{$k$-tree}.
	
	\begin{theorem}[Win~\cite{Win}]\label{thm:win}
		Let $k\ge3$ be an integer, and let $G$ be a connected graph. If
		$c(G-S)\le (k-2)|S|+2$ for every $S\subseteq V(G)$, then $G$ has a
		spanning tree of maximum degree at most $k$.
	\end{theorem}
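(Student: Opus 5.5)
The plan is a local-exchange argument in the style of Fürer and Raghavachari, run to a contradiction with the hypothesis $c(G-S)\le (k-2)|S|+2$. Suppose $G$ has no spanning tree of maximum degree at most $k$. Among all spanning trees, I would choose $T$ minimizing the degree sequence lexicographically from the top: first minimize $\Delta(T)>k$, then the number of vertices of degree $\Delta(T)$, and so on down to the vertices of degree $k+1$. The goal is to find a set $S$ with $c(G-S)\ge (k-2)|S|+3$, contradicting the hypothesis.

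\textbf{Basic exchange.} Let $F$ be a forest and let $e=uv\in E(G)$ join two different components of $F$. Then $T+e$ contains a unique cycle $C$. If $C$ passes through a vertex $w$ with $d_T(w)\ge k+1$, and both $d_T(u)$ and $d_T(v)$ are at most $k-1$, I replace an edge of $C$ at $w$ by $e$. This lowers the degree of $w$ and raises the degrees of $u,v$ to at most $k$. The new tree is therefore lexicographically smaller, which contradicts the choice of $T$.

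\textbf{Construction of $S$.} Start with $S_0$, the set of vertices of $T$-degree at least $k+1$ (nonempty by assumption), together with all vertices of degree exactly $k$. Iterate: whenever a $G$-edge joins two components of $T-S_i$ and has an endpoint of degree $k$, that endpoint could be lowered by a further exchange. Following Fürer and Raghavachari, I would add the "blocking" vertices of degree $k$ that prevent such swaps, so that they become part of the witness set. This runs until a stable set $S$ is reached. In the final configuration, every edge of $G$ between two components of $T-S$ would allow an improving sequence of swaps, contradicting minimality. Hence the components of $T-S$ are unions of components of $G-S$, and so
\[
c(G-S)\ge c(T-S).
\]

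\textbf{Counting, the main obstacle.} Since $T$ is a tree,
\[
c(T-S)=\sum_{s\in S} d_T(s)-|S|-e_T(S)+1 ,
\]
where $e_T(S)$ is the number of edges of $T$ inside $S$. Every $s\in S$ has $d_T(s)\ge k$, at least one has $d_T(s)\ge k+1$, and $e_T(S)\le |S|-1$. These only give
\[
c(T-S)\ge (k-2)|S|+3-(|S|-1)+\ldots ,
\]
which is too weak whenever $S$ spans tree edges. To recover the full count, I would choose $T$ so as to additionally minimize $e_T(S)$. Alternatively, I would count components of $T-S$ with multiplicity through the ``attachment'' argument: each tree edge inside $S$ is charged to a vertex whose degree is at least $k+1$ or is handled by a further exchange.

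The delicate step is getting exactly $(k-2)|S|+3$ while keeping the swaps legal for vertices of degree exactly $k$. If the direct charging fails, I would instead follow Win's original route: induction via a Hall-type matching condition between $S$ and the components of $G-S$, which reduces the problem to the $k$-tree criterion.
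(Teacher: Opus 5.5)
The paper does not prove this statement; it imports it from Win's 1989 paper as a black box, so your sketch has to stand on its own. Your framework (a lexicographically minimal spanning tree $T$, a F\"urer--Raghavachari witness set $S$, and $c(G-S)\ge c(T-S)$) can indeed be made to prove Win's theorem. However, you have located the difficulty in the wrong place. The counting is not an obstacle. If every $s\in S$ has $d_T(s)\ge k$ and some $s\in S$ has $d_T(s)\ge k+1$, then $\sum_{s\in S}d_T(s)\ge k|S|+1$. Your identity together with $e_T(S)\le |S|-1$ then gives $c(T-S)\ge k|S|+1-|S|-(|S|-1)+1=(k-2)|S|+3$, which is exactly the required bound; your displayed estimate subtracts $|S|-1$ twice. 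So you need neither the extra minimization of $e_T(S)$ (which would be circular anyway, since $S$ is built from $T$) nor the Hall-type fallback.

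The genuine gap is the step you assert without proof: that for the final $S$ no edge of $G$ joins two components of $T-S$. Your iteration does not produce such an $S$. Since $S_0$ already contains every vertex of degree $k$ and you only ever add vertices, no edge between components of $T-S_i$ has an endpoint of degree $k$, so the process stops at $S=S_0$. For $S_0$ the claim is false in general: an edge of $G$ whose tree path meets $S_0$ only in degree-$k$ vertices gives no improvement at all.

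The procedure must run in the opposite direction. Start from $B_0=\{v: d_T(v)\ge k\}$. Whenever $xy\in E(G)$ joins two components of $T-B_i$, let $P$ be the $x$--$y$ path in $T$. Either $P$ contains a vertex of degree at least $k+1$, and you must derive a contradiction, or you set $B_{i+1}=B_i\setminus V(P)$.

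What makes this work is a lemma you never state. For every component $X$ of $T-B_i$ and every $w\in V(X)$ with $d_T(w)=k$, there is a spanning tree of $G$ that agrees with $T$ except on edges of $G[V(X)]$, still induces a tree on $V(X)$, gives $w$ degree $k-1$, and keeps every vertex of $X$ at degree at most $k$. Your basic exchange does not give this, because it needs $d_T(x),d_T(y)\le k-1$, while the endpoints of the edge used at stage $i$ may be degree-$k$ vertices freed at earlier stages. These must first be lowered, by induction on $i$, inside the two disjoint components of $T-B_i$ containing $x$ and $y$. You must then check that these inner swaps leave the part of the $x$--$y$ path outside those components unchanged, in particular the path edges at $w$ or at the high-degree vertex. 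Only with this lemma does a path through a vertex of degree at least $k+1$ yield a lexicographically smaller tree. Only then does the terminal $S=B_i$, which still contains every vertex of degree at least $k+1$, satisfy $c(G-S)=c(T-S)\ge(k-2)|S|+3$.
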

	
	We next recall the recursive definition of treedepth; see~\cite[Chapter~6]{NesetrilOssona} for background. Set
	$\td(\varnothing)=0$. If $G$ is disconnected, then
	$\td(G)=\max\{\td(D): D\text{ is a component of }G\}$, and if $G$ is
	nonempty and connected, then $\td(G)=1+\min_{v\in V(G)}\td(G-v)$.
	Treedepth is monotone under taking subgraphs. Moreover, a connected graph
	has treedepth $1$ if and only if it has exactly one vertex.
	
	The following theorem is the main result of~\cite{BrianskiEtAl}.
	
	\begin{theorem}[Bria\'nski et al.~\cite{BrianskiEtAl}]\label{thm:tdcirc}
		Let $G$ be a 2-connected graph. Then $\cir(G)\ge \td(G)$.
	\end{theorem}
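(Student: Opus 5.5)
Since Theorem~\ref{thm:tdcirc} is quoted from~\cite{BrianskiEtAl} and used as a black box in the proof of Theorem~\ref{thm:main}, what follows is only a sketch of how I would try to prove it; I have not checked that the strengthened statement below actually closes the induction.

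I would prove Theorem~\ref{thm:tdcirc} by induction on $|V(G)|$, via a strengthened statement that survives deleting one vertex. Deleting a vertex $v$ from a 2-connected graph leaves a connected graph whose blocks are 2-connected graphs or single edges. A bound in terms of $\cir$ alone does not obviously drop when we pass to $G-v$, so the strengthening has to record how the remaining pieces hang together. Concretely, I would aim to prove the following for a 2-connected graph $H$ and a vertex $r\in V(H)$. If every cycle of $H$ has length at most $k$, and every cycle through $r$ has length at most $k'\le k$, then $H$ has an elimination forest of depth at most $k$ in which $r$ lies at depth at most $k-k'+1$. In other words, vertices that only sit on short cycles can be removed late.

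The key steps, in order, are as follows. First, fix a longest cycle $C$ of $G$, of length $k=\cir(G)$, and choose the vertex $v$ to delete on $C$. Second, decompose $G-v$ into its block--cut tree and root it at the block containing $C-v$; call this root block $B_0$. Third, show that every block of $G-v$ has circumference at most $k-1$, or else is handled by the rooted strengthening with a smaller parameter. The tool here is Menger's theorem in $G$. If a block $B$ of $G-v$ contained a cycle of length $k$ that we cannot afford, then 2-connectivity of $G$ gives two disjoint paths from that cycle to $v$ and to $C$. Rerouting along these paths should produce a cycle longer than $k$, which is a contradiction. Fourth, combine the elimination forests of the blocks along the block--cut tree. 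Here the rooted version of the hypothesis is essential: a cut vertex shared by several blocks must be placed high enough in the forest that the pieces merge into a single forest of depth at most $k-1$. Prepending $v$ then gives $\td(G)\le 1+(k-1)=k$.

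The main obstacle is the third and fourth steps together. A single vertex deletion need not lower the circumference of every block. For example, $G-v$ may still contain a different cycle of length $k$ disjoint from $v$. For this reason I expect the naive induction ``delete one vertex, lose one unit of circumference'' to fail, and the real content lies in choosing the right potential. I would first try a potential based on the length of a longest path between the two attachment vertices of each block, that is, between the cut vertex toward the root and the vertex through which the block reaches $v$. Such a path closes up through $v$ into a cycle of $G$, so its length is bounded by $k-1$ minus the length of the connecting detour. This is the quantity that genuinely decreases as we go down the block--cut tree. If this works, the depths of the merged elimination forests telescope to at most $k-1$.
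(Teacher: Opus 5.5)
The paper does not prove this statement; it quotes it from Bria\'nski et al.\ and uses it as a black box, so there is no in-paper argument to compare with. Judged on its own, your sketch has a genuine gap, and it is where you suspect. Step~3 is false as stated, and Menger-type rerouting cannot repair it. Take three internally disjoint $x$--$y$ paths of equal length and let $v$ be an interior vertex of one of them. Then $v$ lies on a longest cycle, yet the other two paths form a block of $G-v$ that is itself a longest cycle. Disjoint paths from such a cycle to $v$ and to $C$ only reproduce the classical fact that two longest cycles of a 2-connected graph share at least two vertices. In particular at most one block of $G-v$ can have circumference $k$, but the rerouting never yields a cycle longer than $k$. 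More importantly, even ``every block has circumference at most $k-1$'' would not suffice. $\td(G-v)$ is not bounded by the largest treedepth of its blocks, because depth accumulates along chains of blocks: a long path, whose blocks are all single edges, already has unbounded treedepth. So the whole argument rests on Step~4.

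Your strengthened statement does not supply what Step~4 needs. As written, $k'$ is only an upper bound, so $k'=k$ is always admissible and gives the strongest conclusion, namely $r$ at depth $1$. The statement is therefore equivalent to ``every vertex can be the root of an elimination tree of depth at most $k$'', and the $k'$-bookkeeping carries no information. Now suppose you hang a block $B'$ of $G-v$ below its parent cut vertex $c$, where $c$ sits at depth $d_c\ge 1$ in the forest of $G-v$. You then need a forest of $B'-c$ of depth at most $k-1-d_c$. Your hypothesis only gives $\cir(B')-1$. This bound does not shrink with $d_c$, and it is not controlled by cycles through $v$: such a cycle may cross $B'$ along a path with only about $\cir(B')/2$ vertices. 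Conversely, a forest of $B'$ with $c$ at depth greater than $1$ cannot simply be hung below $c$.

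What is missing is a lemma whose budget is measured by longest paths between attachment vertices rather than by circumference, and which controls all attachments of a block at once. A block has one parent attachment but in general many child cut vertices and neighbours of $v$, so ``the two attachment vertices of each block'' is not well defined. You would need an elimination forest of $B-c$ that bounds, simultaneously for every attachment $w$, the depth of $w$ in terms of longest $c$--$w$ paths in $B$. These bounds must add up along every root-to-leaf chain of the block--cut tree to the length of a cycle through $v$, while all non-attachment vertices also respect the global bound. Formulating and proving such a lemma is the actual content of the theorem of Bria\'nski et al.; your final paragraph only expresses the hope that the depths telescope.
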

	
	We also need one elementary observation about bounded-degree trees.
	
	\begin{lemma}\label{lem:tree}
		Let $k\ge2$ and $d\ge1$ be integers, and let $T$ be a tree with
		$\Delta(T)\le k$ and $\td(T)\le d$. Then
		$|V(T)|\le 1+k+k^2+\cdots+k^{d-1}=(k^d-1)/(k-1)$. Consequently, every
		tree $T$ with $\Delta(T)\le k$ satisfies
		$\td(T)\ge \lceil \log_k((k-1)|V(T)|+1)\rceil$.
	\end{lemma}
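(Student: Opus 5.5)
We argue by induction on $d$, using the recursive definition of treedepth. For $d=1$: a tree with $\td(T)\le 1$ is connected and nonempty, so it has exactly one vertex, and $1\le 1=(k^1-1)/(k-1)$.

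For the inductive step, let $d\ge2$ and suppose $T$ is a tree with $\Delta(T)\le k$ and $\td(T)\le d$. If $|V(T)|=1$ the bound is trivial. Otherwise $T$ is nonempty and connected, so by definition there is a vertex $v$ with $\td(T-v)\le d-1$. The forest $T-v$ has exactly $d_T(v)\le k$ components $T_1,\dots,T_m$, since deleting a vertex of a tree leaves one component per incident edge. Each $T_i$ is a tree with $\Delta(T_i)\le k$, and $\td(T_i)\le \td(T-v)\le d-1$ because the treedepth of a disconnected graph is the maximum over its components. Since $d-1\ge1$, the induction hypothesis gives $|V(T_i)|\le (k^{d-1}-1)/(k-1)$. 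Hence
\[
|V(T)|\le 1+k\cdot\frac{k^{d-1}-1}{k-1}=\frac{k^d-1}{k-1}=1+k+\cdots+k^{d-1}.
\]
This uses only the fact that the root $v$ has at most $k$ neighbours. One could sharpen the bound to $k-1$ children below the root, but the crude bound suffices.

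For the consequence, set $d=\td(T)$; since $T$ is nonempty and connected, $d\ge1$. The first part gives $|V(T)|\le (k^d-1)/(k-1)$. Rearranging, $(k-1)|V(T)|+1\le k^d$, so $d\ge \log_k((k-1)|V(T)|+1)$. Since $d$ is an integer, we may take the ceiling.

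The only point needing care is the inductive step. We must justify that the vertex $v$ achieving the minimum has $\td(T-v)=\td(T)-1\le d-1$, and that the components of $T-v$ inherit the treedepth bound. Both follow directly from the recursive definition recalled above, so we expect no real obstacle.
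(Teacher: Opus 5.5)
Your proof is correct and follows the paper's argument essentially verbatim. You induct on $d$, delete a vertex $v$ with $\td(T-v)\le d-1$, and apply the hypothesis to the at most $k$ components of $T-v$; taking the minimizing $v$ directly neatly replaces the paper's split into $\td(T)\le d-1$ and $\td(T)=d$.

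One caution about your unused aside: no sharpening to $k-1$ children below the root is possible. The next vertex deleted in a component of $T-v$ need not be adjacent to $v$, and the bound $(k^d-1)/(k-1)$ is actually attained (for instance by the path on $2^d-1$ vertices when $k=2$), so you should drop that remark.
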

	
	\begin{proof}
		We prove the first assertion by induction on $d$. If $d=1$, then $T$ is
		a connected graph of treedepth $1$, so $|V(T)|=1$. Let $d\ge2$. If
		$\td(T)\le d-1$, then the assertion follows from the induction
		hypothesis, so we may assume that $\td(T)=d$. Since $T$ is connected,
		the recursive definition of treedepth gives
		$d=\td(T)=1+\min_{v\in V(T)}\td(T-v)$, and hence there is a vertex
		$v\in V(T)$ with $\td(T-v)=d-1$; that is, every component of $T-v$ has
		treedepth at most $d-1$. Since $T$ is a tree, $T-v$ has exactly
		$d_T(v)\le k$ components, each of which is a tree of maximum degree at
		most $k$. By the induction hypothesis, each component of $T-v$ has at
		most $1+k+\cdots+k^{d-2}$ vertices. Therefore,
		$|V(T)|\le 1+k\,(1+k+\cdots+k^{d-2})=1+k+\cdots+k^{d-1}$.
		
		For the second assertion, apply the first with $d=\td(T)$: it gives
		$(k-1)|V(T)|+1\le k^{d}$, that is, $d\ge \log_k((k-1)|V(T)|+1)$. Since
		$d$ is an integer, the claimed inequality follows.
	\end{proof}
	
	\begin{proof}[Proof of Theorem~\ref{thm:main}]
		Set $k=\lceil 1/t\rceil+2$. Then $k\ge3$ and $k-2\ge1/t$. Let
		$S\subseteq V(G)$. If $c(G-S)\ge2$, then the $t$-toughness of $G$ gives
		$c(G-S)\le |S|/t\le(k-2)|S|$, and if $c(G-S)\le1$, then trivially
		$c(G-S)\le(k-2)|S|+2$. Thus the hypothesis of Theorem~\ref{thm:win}
		holds, and $G$ has a spanning tree $T$ with $\Delta(T)\le k$. By
		Lemma~\ref{lem:tree}, $\td(T)\ge \lceil \log_k((k-1)n+1)\rceil$. Since
		$T$ is a subgraph of $G$, the monotonicity of treedepth yields
		$\td(G)\ge\td(T)$. Theorem~\ref{thm:tdcirc} now gives
		\[
		\cir(G)\ge\td(G)\ge\td(T)
		\ge \left\lceil \log_k\bigl((k-1)n+1\bigr)\right\rceil.
		\]
		Finally, since $(k-1)n+1\ge n$, we have
		$\lceil \log_k((k-1)n+1)\rceil\ge\log_k n=\log n/\log k$. This
		completes the proof.
	\end{proof}
	
	\section*{Declaration on the Use of AI Tools}
	
	While the author was exploring proof ideas, ChatGPT 5.6 Sol located the
	paper~\cite{BrianskiEtAl}, which immediately led to the short proof
	presented here. Claude Fable 5.1 was used to assist with language
	editing, grammar, clarity, and formatting. The author reviewed and
	verified all AI-assisted edits and takes full responsibility for the
	content of the manuscript.

\end{document}